\newcounter{geqncount} %
{\setcounter{equation}{\value{geqncount}}}
\newtheorem{theo}{Theorem}[section]
\newtheorem{coro}{Corollary}[section]
\newtheorem{lemma}{Lemma}[section]
\newcommand{\beq}{\begin{equation}} \newcommand{\eeq}{\end{equation}}
\newcommand{\beqr}{\begin{eqnarray}}
\newcommand{\eeqr}{\end{eqnarray}}
\newcommand{\beqrn}{\begin{eqnarray*}}
\newcommand{\eeqrn}{\end{eqnarray*}} \newcommand{\brr}{\begin{array}}
\newcommand{\err}{\end{array}}
\newcommand{\bseq}{\begin{subequations}}
\newcommand{\eseq}{\end{subequations}}
\newcommand{\bef}{\begin{figure}} \newcommand{\eef}{\end{figure}}
\newcommand{\bec}{\begin{center}} \newcommand{\eec}{\end{center}}
\newcommand{\bpm}{\begin{pmatrix}}
\newcommand{\epm}{\end{pmatrix}}
\title{A note on the eigenvalues of double band matrices }
\author{Tyler McMillen\footnote{
Department of Mathematics,
California State University at Fullerton, Fullerton, CA  92834, USA (tmcmillen@fullerton.edu).}
}
\begin{document}

\maketitle

\bibliographystyle{plain}


\begin{abstract}

\noindent 
We consider matrices containing two diagonal bands of positive entries.  We show that all eigenvalues of such matrices are of the form $r\,\zeta$, where $r$ is a nonnegative real number and $\zeta$ is a $p$th root of unity, where $p$ is the period of the matrix, which is computed from the distance between the bands.

\vskip12pt \noindent KEYWORDS: Eigenvalues,  Nonnegative matrices, Irreducible matrices

\end{abstract}


\section{Introduction}
\label{s1}

Let $b$ and $k$ be positive integers and consider the double band matrix
\begin{equation}
A=\bpm 0 & 0 & \cdots & a_{1,k+1} & 0 & \cdots & 0 \\
0 & 0 & \cdots & 0 & a_{2,k+2}  & \cdots & 0\\
\vdots & & & &&\ddots  &\\
a_{b+1,1} & 0 & \cdots & &&& \\
0&a_{b+2,2} &   \cdots & &&&  \\
&& \ddots &&&& 
\epm ,
\label{A}
\end{equation}
where the diagonals $a_{b+j,j}, a_{i,k+i}$ are positive real numbers and the remaining entries are zero.  Such matrices arise, for example, in second order differential equations.  See   \cite{mcboag08} for an application to the Lam\'e equation.
The matrices as in \eqref{A} have a period $p$\footnote{The period is also sometimes referred to as the \textit{index}, or \textit{index of imprimitivity}.  A primitive matrix is one in which $p=1$.}, which is the greatest common divisor of the lengths of cycles in the directed graph $\Gamma(A)$.  Thus it is given by 
\beq 
p=\frac{b+k}{\mbox{gcd}(b,k)}.
\eeq
If $b$ and $k$ are relatively prime then $A$ is irreducible.  The
Perron-Frobenius theorems \cite{hojo85} tells us that the spectrum of a nonnegative and irreducible matrix with period $p$ is invariant under rotations by $2\pi/p$ and that there are exactly $p$ eigenvalues with maximal modulus $\rho(A)$ given by $\rho(A) \exp(i2\pi j/p),\; j=0,\dots,p-1$.  The main result of this paper is to show that for matrices with the special form  \eqref{A}, in addition to this, the eigenvalues all lie on the lines through the $p$th roots of unity in the complex plane.  
\begin{theo}
Let $A$ be an $n\times n$ matrix as in \eqref{A}.  Let $n=mp+q$ for $0\leq q<p$, and let $g=\mbox{gcd}(b,k)$.   If $g=1$ then $A$ has a zero eigenvalue of multiplicity $q$, and $mp$ eigenvalues
\begin{equation} 
r_s\, e^{i2\pi  j/p}, \;\; j = 0,\dots, p-1, \; s = 1, \dots, m,
\end{equation}
where $r_s,\; s=1,\dots, m$, are distinct, real and positive.  

More generally, the nonzero eigenvalues of $A$ are given by
\beq
r_{s,t}\, e^{i2\pi j/p}, \;\; j=0, \dots, p-1, \;\; t = 1,\dots, g, \;\; s = 1,\dots, 
\left\lfloor \frac{ \left\lfloor \frac{n-t}{g}\right\rfloor+1}{p} \right\rfloor ,
\label{g1}
\eeq
where for each fixed $t$, the $r_{s,t}$'s are distinct, real and positive.  
The zero eigenvalue of $A$ has multiplicity
\beq
g\left(\left\lfloor \frac{n}{g} \right\rfloor \mbox{ mod } p\right) + 
\left(n\mbox{ mod } g\right)\left[\left(\left\lfloor \frac{n}{g} \right\rfloor + 1\right) \mbox{ mod } p
- \left\lfloor \frac{n}{g} \right\rfloor \mbox{ mod } p \right]
\label{g2}
\eeq
\label{theo1}
\end{theo}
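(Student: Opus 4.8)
The plan is to reduce everything to the irreducible case and then combine two ingredients: a diagonal similarity that pins down the \emph{angular} structure of the spectrum, and total nonnegativity of $A^p$ that pins down the \emph{radial} structure.

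\medskip
\noindent\textbf{Reduction to $g=1$.} Since $g\mid b$ and $g\mid k$, every edge $j\to j-k$ and $j\to j+b$ of $\Gamma(A)$ preserves the residue of the index modulo $g$. Hence $A$ is permutation-similar to a direct sum $\bigoplus_{t=1}^{g}A_t$, where $A_t$ acts on the indices $\{i\le n: i\equiv t \ (\mathrm{mod}\ g)\}$, of which there are $N_t=\lfloor (n-t)/g\rfloor+1$, and $A_t$ is again a double band matrix but with coprime bands $b/g,\ k/g$ and the \emph{same} period $p=(b+k)/g=b/g+k/g$. Spectra and zero-multiplicities add over $t$, so once the $g=1$ statement is established, applying it to each $A_t$ with $N_t=m_t p+q_t$ gives $m_t=\lfloor N_t/p\rfloor$ distinct positive radii $r_{s,t}$, which is exactly the count in \eqref{g1}, while the zero eigenvalue has multiplicity $\sum_{t=1}^{g}(N_t \bmod p)$; a direct simplification of this sum yields the closed form \eqref{g2}. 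I therefore assume $g=1$ (so $A$ is irreducible and $p=b+k$) from now on.

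\medskip
\noindent\textbf{The angular structure via a diagonal similarity.} Set $\zeta=e^{2\pi i/p}$ and let $D=\mathrm{diag}(\zeta^{\mu},\zeta^{2\mu},\dots,\zeta^{n\mu})$, where $\mu$ is chosen so that $\mu k\equiv 1\ (\mathrm{mod}\ p)$; such $\mu$ exists because $\gcd(k,p)=\gcd(k,b+k)=\gcd(k,b)=1$. For an upper-band entry one then has $d_{i+k}/d_i=\zeta^{\mu k}=\zeta$, and for a lower-band entry $d_{i-b}/d_i=\zeta^{-\mu b}=\zeta$ since $\mu b\equiv-\mu k\equiv-1\ (\mathrm{mod}\ p)$; consequently $D^{-1}AD=\zeta A$. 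Thus $\chi_A(\lambda)=\det(\lambda I-A)$ satisfies $\chi_A(\zeta\lambda)=\zeta^{\,n}\chi_A(\lambda)$, which forces every coefficient of $\chi_A$ to vanish except in degrees $j\equiv n\equiv q\ (\mathrm{mod}\ p)$. Writing $n=mp+q$ this gives
\[
\chi_A(\lambda)=\lambda^{q}\,P(\lambda^{p}),\qquad \deg P=m,\ P \text{ monic}.
\]
Hence the nonzero eigenvalues of $A$ are precisely the $p$-th roots of the roots $\rho_1,\dots,\rho_m$ of $P$, and these $\rho_s$ are the nonzero eigenvalues of $A^{p}$. It remains to show that each $\rho_s$ is real and positive (so that $\lambda=r_s\zeta^{j}$ with $r_s=\rho_s^{1/p}$), that the $\rho_s$ are distinct, and that $P(0)\neq0$ so that $0$ has multiplicity exactly $q$.

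\medskip
\noindent\textbf{The radial structure via total nonnegativity.} Partition the indices into the $p$ residue classes modulo $p$. Because both moves $j\mapsto j-k$ and $j\mapsto j+b$ shift the class by $-\mu k\equiv-1$, the matrix $A$ sends each class to the next one cyclically, so after reordering (classes consecutively, increasing index inside each class) $A$ becomes block cyclic with blocks $B_0,\dots,B_{p-1}$. Each source index $j$ connects only to the two targets $j-k$ and $j+b$, which lie in index-adjacent positions of the next class (they differ by $b+k=p$), so every $B_\tau$ is a \emph{nonnegative bidiagonal} matrix. Then $A^{p}$ is block diagonal with diagonal blocks $C_\tau=B_{\tau-1}B_{\tau-2}\cdots B_{\tau}$ (cyclic products). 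Now I use that nonnegative bidiagonal matrices are totally nonnegative and that total nonnegativity is preserved under (even rectangular) products by the Cauchy--Binet formula; hence each square block $C_\tau$ is totally nonnegative, and by the Gantmacher--Krein theory its eigenvalues are real and nonnegative. Their union is $\sigma(A^{p})$, which gives the reality and nonnegativity of the $\rho_s$. Since $A$ is irreducible, the cyclic products are irreducible, so the smallest (size-$m$) block is a nonsingular irreducible totally nonnegative, i.e.\ oscillatory, matrix; such matrices have $m$ \emph{distinct positive} eigenvalues, which are exactly $\rho_1,\dots,\rho_m$. This also forces $P(0)\neq0$, and a rank count (the $p$ blocks have sizes $m$ or $m+1$, with total $mp$ nonzero eigenvalues) shows that $0$ occurs with multiplicity $q$.

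\medskip
\noindent\textbf{Main obstacle.} Everything angular is routine once the similarity $D^{-1}AD=\zeta A$ is in hand. The real work, and the step I expect to be delicate, is the radial claim: establishing that the cyclic products $C_\tau$ have real, positive, \emph{simple} eigenvalues. The total-nonnegativity argument delivers reality and nonnegativity cleanly, but two points need care: verifying that the blocks $B_\tau$ really are nonnegative bidiagonal (including the boundary classes of unequal size, where the $B_\tau$ are rectangular and one must use the rectangular version of Cauchy--Binet), and extracting \emph{distinctness} of the $\rho_s$, which relies on identifying an oscillatory block and invoking the simplicity of eigenvalues of oscillatory matrices.
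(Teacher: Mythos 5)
Your architecture matches the paper's almost exactly: the mod-$g$ splitting into coprime-band blocks, the cyclic reordering by residue classes mod $p$ into bidiagonal blocks, Cauchy--Binet to get total nonnegativity of the cyclic products, and the oscillatory property of the size-$m$ block to get distinct positive radii. (Your diagonal similarity $D^{-1}AD=\zeta A$ is a clean alternative to the paper's way of extracting the rotational symmetry from the block-cyclic form, and your counts for general $g$ do simplify to \eqref{g1} and \eqref{g2}.) But there is one genuine gap, and it sits exactly where the paper does its real work: the \emph{nonsingularity} of the size-$m$ cyclic product. You write ``Since $A$ is irreducible, the cyclic products are irreducible, so the smallest (size-$m$) block is a nonsingular irreducible totally nonnegative \dots matrix.'' Irreducibility does not give nonsingularity for totally nonnegative matrices --- the $2\times 2$ all-ones matrix is irreducible, totally nonnegative, and singular --- so as written this step is unsupported. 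Without it you get neither $P(0)\neq 0$ (hence not the multiplicity $q$ of the zero eigenvalue) nor the oscillatory property (hence not the distinctness of the $r_s$), which you yourself flag as the delicate point but never actually discharge.

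The paper closes this by a second application of Cauchy--Binet, to $\det(D_j)$ itself: expanding the determinant of the cyclic product over intermediate index sets, the single term in which every $\theta_i=\{1,\dots,m\}$ is a product of determinants of leading principal $m\times m$ submatrices of the bidiagonal blocks, each of which is triangular with positive diagonal and hence has positive determinant; every other term is nonnegative by total nonnegativity, so $\det(D_j)>0$. This is also where the bookkeeping you defer (which blocks are upper versus lower bidiagonal, and that the main-diagonal entries of each rectangular block are positive) actually gets used. The paper then verifies the Gantmacher--Krein condition $x_{i,i+1},x_{i+1,i}>0$ directly from the fact that the first block is lower bidiagonal and the last is upper bidiagonal, rather than invoking the (true but stronger, and itself citation-requiring) theorem that invertible irreducible totally nonnegative matrices are oscillatory. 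If you supply the determinant argument, your proof is complete and is essentially the paper's.
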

Before proceeding to the proof we make a few remarks.  First, if $b=k$, i.e. the bands are an equal distance from the diagonal, then $p=2$, so all the eigenvalues are real, and come in positive and negative pairs.  The case $b=1$, $k=2$ was considered in \cite{mcboag08}, where it was shown that all eigenvalues are of the form $r\exp(i2\pi  j/3)$.  Theorem \ref{theo1} is a generalization of the result in that paper.  The following corollary is an immediate consequence of the construction of the proof of the above theorem, and generalizes to the case when the two bands contain nonnegative and complex entries.

\begin{coro}
Let $A$ be an $n\times n$ complex matrix with $a_{ij}=0$ for all $i-j \neq b$ or $-k$.  Then the spectrum of $A$ is invariant under rotations by $2\pi/p$.  If, in addition, $a_{ij}$ is real and nonnegative, then the eigenvalues of $A$ are all of the form $r\,e^{i2\pi j/p}$, where $r$ is a nonnegative real number.
\label{cor1}
\end{coro}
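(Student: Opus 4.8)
The plan is to realise the rotation $z\mapsto e^{i2\pi/p}z$ on the spectrum by an explicit diagonal similarity and then read off both halves of the corollary from it. Writing $\zeta=e^{i2\pi/p}$, I seek a diagonal matrix $D=\mathrm{diag}(d_1,\dots,d_n)$ with $D^{-1}AD=\zeta A$. Since $(D^{-1}AD)_{ij}=(d_j/d_i)\,a_{ij}$ and the only nonzero entries sit at $j=i+k$ (upper band) and at $i=j+b$ (lower band), this identity reduces to the two requirements $d_{i+k}/d_i=\zeta$ and $d_{j+b}/d_j=\zeta^{-1}$; wherever $a_{ij}=0$ it holds trivially, so nothing is demanded of the magnitudes of the entries. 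Consequently the very same $D$ works whether the band entries are positive, merely nonnegative, or arbitrary complex numbers, and once $D$ is produced we obtain $\chi_A(\lambda)=\det(\lambda I-D^{-1}AD)=\det(\lambda I-\zeta A)=\zeta^n\chi_A(\zeta^{-1}\lambda)$, whence $\sigma(A)=\sigma(\zeta A)=\zeta\,\sigma(A)$ and the spectrum is invariant under rotation by $2\pi/p$. This settles the first assertion, for complex entries.

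The step needing care is exhibiting such a $D$ for \emph{every} admissible pair $(b,k)$. I would set $d_i=\zeta^{\phi(i)}$ and look for an integer exponent $\phi$, defined modulo $p$, with $\phi(i+k)\equiv\phi(i)+1$ and $\phi(i+b)\equiv\phi(i)-1$. Because $b$ and $k$ are both multiples of $g=\gcd(b,k)$, the moves $\pm k,\pm b$ never leave a residue class modulo $g$, so $\phi$ may be defined on each class separately. Writing $b=gb'$, $k=gk'$ with $\gcd(b',k')=1$ and $p=b'+k'$, inside a fixed class I index the entries $i_0+\ell g$ by $\ell$ and put $\phi(i_0+\ell g)\equiv\gamma\ell\pmod p$, where $\gamma\equiv(k')^{-1}\pmod p$; this inverse exists since $\gcd(k',p)=\gcd(k',b'+k')=\gcd(k',b')=1$. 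A $k$-step advances $\ell$ by $k'$ and changes the exponent by $\gamma k'\equiv1$, while a $b$-step advances $\ell$ by $b'$ and changes it by $\gamma b'\equiv-\gamma k'\equiv-1$, using $b'\equiv-k'\pmod p$. The latent obstruction, that the naive global choice $\phi(i)=\alpha i$ requires $\gcd(k,p)=1$ and so fails exactly when $\gcd(g,p)>1$ (for instance $b=k=2$, where $p=2$), is precisely what this per-class construction sidesteps.

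For the nonnegative real case I would combine this rotation invariance with Theorem \ref{theo1}. The functional equation $\chi_A(\lambda)=\zeta^n\chi_A(\zeta^{-1}\lambda)$ forces the coefficient of $\lambda^j$ in $\chi_A$ to vanish whenever $j\not\equiv n\pmod p$, so $\chi_A(\lambda)=\lambda^q Q(\lambda^p)$ for a monic $Q$ of degree $m$; the nonzero eigenvalues are then the $p$th roots of the roots of $Q$, and the assertion is equivalent to saying that every root of $Q$ is a nonnegative real number. For strictly positive band entries this is exactly the content of Theorem \ref{theo1}, which pins the nonzero eigenvalues to $r\,\zeta^j$ with $r>0$ real, i.e. the roots of $Q$ to the positive numbers $r^p$. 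To reach a general nonnegative $A$ I would approximate it by the matrices $A_\varepsilon$ obtained by replacing each vanishing band entry by $\varepsilon>0$: every $A_\varepsilon$ has all its eigenvalues of the form $r\,e^{i2\pi j/p}$ with $r\ge0$, the eigenvalues vary continuously with $\varepsilon$, and the set $\{\,r\,e^{i2\pi j/p}:r\ge0,\ 0\le j<p\,\}$ is closed, so the limiting eigenvalues of $A$ lie in it as well.

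The main obstacle I anticipate is not the algebra above but justifying, at the heart of Theorem \ref{theo1}, that the radial parts are genuinely real and nonnegative rather than merely that the spectrum is rotation invariant; rotation invariance by itself permits an entire orbit to be rotated off the rays through the $p$th roots of unity. Concretely, after the block-cyclic reordering induced by $\phi$ one has $A^p=\mathrm{diag}(B_0,\dots,B_{p-1})$, and the roots of $Q$ are the eigenvalues of the blocks $B_t$; the real work is to show each $B_t$, a product assembled from the two bands as one goes once around the cycle of classes, is similar to a symmetric positive semidefinite matrix, which confines its eigenvalues to $[0,\infty)$. Granting Theorem \ref{theo1}, that difficulty is already absorbed, and the corollary follows as sketched.
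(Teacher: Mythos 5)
Your proposal is correct, but it reaches the corollary by a genuinely different route than the paper. The paper simply revisits its proof of Theorem \ref{theo1}: the permutation bringing $A$ to the block-cyclic form \eqref{C} with bidiagonal blocks depends only on the zero pattern, so the spectrum still consists of the $p$th roots of the eigenvalues of the $D_j$'s in \eqref{Cp} (rotation invariance for arbitrary complex band entries); and when the band entries are merely nonnegative, the Cauchy--Binet computation \eqref{CB} still shows each $D_j$ is totally nonnegative, so its eigenvalues are real and nonnegative, which gives the second claim without any further argument. You instead prove rotation invariance by exhibiting a diagonal similarity $D^{-1}AD=\zeta A$, and your per-residue-class construction of the exponent $\phi$ correctly dodges the obstruction when $\gcd(g,p)>1$ (one can check consistency in general: a closed walk using net $U$ steps of $+k$ and $V$ of $+b$ has $Uk+Vb=0$, which forces $U-V\equiv 0 \pmod p$, matching your explicit formula). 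For the nonnegative case you treat Theorem \ref{theo1} as a black box, perturb vanishing band entries to $\varepsilon>0$, and pass to the limit using continuity of eigenvalues and the closedness of the union of rays $\{re^{i2\pi j/p}: r\ge 0\}$; this is sound. The trade-off: your diagonal-similarity argument is more elementary and self-contained than the paper's (it needs none of the block-cyclic machinery for the first assertion), while the paper's route is more direct for the second assertion (no limiting argument) and yields the extra structural information that the nonzero eigenvalues are $p$th roots of the spectrum of an explicit totally nonnegative matrix. Your closing speculation that the blocks might be shown positive semidefinite by symmetrization is not how the paper argues (it uses oscillatory/totally nonnegative matrices), but since you only invoke Theorem \ref{theo1} as given, this does not affect the validity of your proof.
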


\section{Proof of Theorem \ref{theo1}}

We first prove the following lemma that allows us to reduce the problem to the case when $b$ and $k$ are relatively prime.  If $g>1$ we can move the diagonals ``inward'' at the expense of rearranging them and inserting zeros.  In the proofs that follow we will employ many of the results from the theory of nonnegative matrices, which can be found in \cite{mi88} and \cite{hojo85}.

\begin{lemma}
$A$ is cogredient\footnote{A matrix $A$ is cogredient to $B$ if there is a permutation matrix $P$ such that $A=PBP^T$.} to a direct sum of $g$ matrices of the form \eqref{A} where the $b$ and $k$ are relatively prime.  That is, there is a permutation matrix $P$ such that
\begin{equation}
B = PAP^T = \bpm B_1 & 0 & \cdots & 0 \\ 0 & B_2 & \cdots & 0 \\  &  & \ddots &  \\ 0 & 0 & \cdots & B_g\epm
\label{B}
\end{equation}
where $B_i$ is an $\left( \left\lfloor \frac{n-i}{g}\right\rfloor + 1\right) \times \left( \left\lfloor \frac{n-i}{g}\right\rfloor + 1\right)$ square matrix with non-zero entries at the positions $(n_b+j,j)$ and $(i,n_k+i)$ and zeros everywhere else, where $n_b = b/g$ and $n_k = k/g$.
\end{lemma}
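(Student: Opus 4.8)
The plan is to build the permutation $P$ directly from the arithmetic structure of the two bands. The only nonzero entries of $A$ are the subdiagonal entries $a_{b+j,j}$ and the superdiagonal entries $a_{i,k+i}$, so in the digraph $\Gamma(A)$ every edge either raises an index by $b$ or lowers it by $k$. Since $g=\gcd(b,k)$ divides both $b$ and $k$, each such move preserves the residue of the index modulo $g$. Consequently two indices can be joined by a path in $\Gamma(A)$ only if they are congruent mod $g$, and the index set $\{1,\dots,n\}$ splits into $g$ classes $C_t=\{\,i : i\equiv t \pmod g\,\}$, $t=1,\dots,g$, between which $A$ has no nonzero entries.

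First I would let $\sigma$ be the permutation of $\{1,\dots,n\}$ that lists the elements of $C_1$ in increasing order, followed by those of $C_2$, and so on through $C_g$, and let $P$ be the associated permutation matrix, so that $B=PAP^T$ simply relabels the rows and columns of $A$ according to $\sigma$. Because $A$ has no entries connecting distinct classes, $B$ is block diagonal with one block $B_t$ per class $C_t$. The class $C_t$ consists of the indices $t, t+g, t+2g, \dots$ not exceeding $n$, of which there are exactly $\lfloor (n-t)/g\rfloor+1$; this yields the asserted block sizes, and since the classes partition $\{1,\dots,n\}$ these sizes sum to $n$.

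Next I would read off the band positions inside a single block. Writing the $m$th smallest element of $C_t$ as $t+(m-1)g$, the subdiagonal entry $a_{b+j,j}$ with $j=t+(m-1)g$ lands in row $b+j=t+(m-1+n_b)g$, i.e. the $(m+n_b)$th element of $C_t$, so in $B_t$ it occupies position $(n_b+m,m)$; likewise the superdiagonal entry $a_{i,k+i}$ with $i$ the $m$th element occupies position $(m,n_k+m)$. Thus each $B_t$ has exactly the claimed band structure with offsets $n_b=b/g$ and $n_k=k/g$, and $\gcd(n_b,n_k)=\gcd(b,k)/g=1$, so each block is again of the form \eqref{A} but with relatively prime band distances.

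The only real work is the bookkeeping of the previous paragraph: verifying that a jump of $b$ (resp. $k$) among the original indices corresponds to a jump of exactly $n_b$ (resp. $n_k$) positions within a class, and that no band entry is lost or spuriously created at the class boundaries. This is where I would be most careful, but it reduces to the single observation that stepping by $g$ within $C_t$ advances the within-class position by one; once that is in hand, the block-diagonal structure and the double-band form of each block follow immediately.
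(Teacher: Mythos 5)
Your proposal is correct and follows essentially the same route as the paper: the permutation you describe (listing the residue classes modulo $g$ in increasing order) is exactly the $\sigma$ constructed in the paper's proof, and your within-class position computation reproduces its verification that the bands land at offsets $n_b$ and $n_k$. The only cosmetic difference is that you justify the block-diagonal zero pattern by noting that $b$ and $k$ are multiples of $g$ (so nonzero entries never connect distinct residue classes), whereas the paper reaches the same conclusion by counting the relocated entries.
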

\begin{proof}
We construct $P$ explicitly.  Let $\sigma$ be the permutation of the integers $1,\dots,n$ defined as follows.  For $i = 1,\dots, g$, define
\beq 
n_i = \left\lfloor \frac{n-i}{g}\right\rfloor + 1
\eeq
and the partial sums $N_1=0$, $N_i =  n_1 + \cdots n_{i-1}$ for $i=2,\dots g$.  A simple calculation shows that $n_1+n_2+\cdots + n_g = n$.  Now, for each $i=1,\dots, g$ define
\beq
\sigma_{N_i+j} = i + (j-1) g, \;\; j = 1, 2, \dots, n_i
\eeq
Then $\sigma$ is a permuation of $(1,\dots, n)$, and we have 
\begin{eqnarray}
\sigma_{n_b+j} - \sigma_{j} & = & b, \\
\sigma_{n_k+i} - \sigma_i & = & k,
\end{eqnarray}
for all pairs $N_i<n_b+j, j\leq N_i + n_i$ for some $i$, and likewise for $n_k+i$, $i$.  Now we define the permutation matrix $P$ by setting $p_{j,\sigma_j} = 1$ for $j=1,\dots,n$.  Then, define $B=PAP^T$, so that the $(i,j)$th entry of $B$ is $b_{ij} = a_{\sigma_i,\sigma_j}$.  Since $a_{ij}\neq 0$ if and only if $i-j=b$ or $i-j = -k$, as long as $N_i<n_b+j, j\leq N_i + n_i$ for some $i$ and $N_l < n_k+i,i \leq N_l + n_l$ for some $l$, we have
\begin{eqnarray}
b_{n_b+j,j} & = & a_{\sigma_{n_b+j},\sigma_j} \; = \; a_{s+b,s} ,     \\
b_{i,n_k+i} & = & a_{\sigma_i,\sigma_{n_k+i}} \; = \; a_{t,t+k},
\end{eqnarray}
for some $s$ and $t$.

Notice that each square block $N_l < i,j \leq N_l + n_l$ on the diagonal of $B$ contains $n_l - n_b$ nonzero elements at the positions $(n_b + j, j)$ and $n_l - n_k$ nonzero elements in the positions $(i, n_k + i)$, for a total of $n-gn_b = n - b$ elements in the positions $(n_b+j,j)$ and $n-gn_k=n-k$ elements in the positions $(i,n_k+i)$.  Thus, since $PAP^T$ moves each element of $A$ to an unique position, we see that $B$ has the form \eqref{B}, as required.
\end{proof}

We can now proceed to the proof of Theorem \ref{theo1}.
\begin{proof}[Proof of Theorem \ref{theo1}]
Suppose, first of all, that $b$ and $k$ are relatively prime.  Then $p=b+k$.  Also, we may assume that $b\leq k$ without loss of generality.
Since $A$ is irreducible with period $p$ it is cogredient to a matrix in superdiagonal block form.  That is, there is a permutation matrix $P$ such that
\begin{equation}
C = PAP^T = 
\bpm 0   & C_1 &   0    &  \cdots & 0 \\
          0   & 0     & C_2  &  \cdots & 0 \\
          \vdots&   &          & \ddots & \\
          0       &  0 &      0   &  \cdots   & C_{p-1} \\
          C_p  & 0 & 0 & \cdots & 0
\epm,
\label{C}
\end{equation}
where the diagonal zero blocks are square, but the blocks $C_j$ may be rectangular.
The key to the proof is that we can find a $P$ such that the matrices $C_j$ are bidiagonal.  We construct P explicitly.  We first construct a permutation $\sigma$ of the integers from 1 to $n$.  
For each $i=0,\dots, p-1$ define $\gamma_i, z_i$, and $n_i$ by
\begin{eqnarray} 
\gamma_i &=& (ik+1)\mbox{ mod } p = (ik+1) + z_i, \\
n_i & = & \left\lfloor \frac{n-\gamma_i}{p}\right\rfloor + 1,
\end{eqnarray}
and $n_{-1}=0$.  An elementary calculation shows that $n_0+\cdots+n_{p-1} = n$.  To simplify notation we also define the partial sums $N_i = n_0 + \cdots + n_i$.  Now, for each $i=0,\dots, p-1$, define
\begin{equation}
\sigma_{N_{i-1} +j} = \gamma_{i}+(j-1)p, \;\;\; j=1,\dots,n_{i}.
\end{equation} 
Thus $\sigma = \left(\sigma_1,\dots,\sigma_n\right)$ is a permutation of the numbers $(1,\dots,n)$.

Now we define the permutation matrix $p_{j,\sigma(j)} = 1$.  Then the $(i,j)$ entry of $C=PAP^T$ is $c_{ij} = a_{\sigma_i,\sigma_j}$.  For ordered subsets $\alpha, \beta$ of $\{1,\dots, n\}$, let $A(\alpha,\beta)$ be the submatrix with rows in $\alpha$ and columns in $\beta$.  We define the submatrices $C_i$ as follows: 
\beqr
C_i & = & C\left(\left\{N_{i-2}+1,\dots, N_{i-2} + n_{i-1}\right\}, \left\{N_{i-1}+1,\dots, N_{i-1}+n_{i}\right\}\right) , \;\;\; i = 1,\dots, p-1, \\
C_p & = & C\left(\left\{N_{p-2}+1, \dots, n\right\},\left\{1,\dots, n_0\right\}\right)
\eeqr   
Note that since $b$ and $k$ are relatively prime, and $b\leq k$, $z_i-z_{i+1}$ is either 0 or 1.  We now use this fact to show that each $C_i$ is bidiagonal.

Consider the main, lower and upper diagonals of $C_i$ for $i=1,\dots, p-1$.  Note that
\begin{eqnarray}
\sigma_{N_{i-1}+j}-\sigma_{N_{i}+j} &=& \gamma_i + (j-1)p - \left(\gamma_{i+1}+(j-1)p\right) \nonumber \\
&=&  -k + \left(z_i-z_{i+1}\right)p \nonumber \\
&=& \left\{\begin{array}{ll} -k & \mbox{ if } z_i - z_{i+1} = 0 \\ b & \mbox{ if } z_i - z_{i+1} = 1
\end{array}\right. \\[.1in]
\sigma_{N_{i-1}+j+1} - \sigma_{N_i+j} & = & \gamma_i+jp -\left(\gamma_{i+1}+(j-1)p\right) \nonumber \\
& = & b + \left(z_i-z_{i+1}\right)p  \nonumber \\
& = & \left\{\begin{array}{ll} b & \mbox{ if } z_i - z_{i+1} = 0 \\ b + p & \mbox{ if } z_i - z_{i+1} = 1
\end{array}\right. \\[.1in]
\sigma_{N_{i-1}+j} - \sigma_{N_i+j+1} & = & \gamma_i+(j-1)p -\left(\gamma_{i+1}+jp\right) \nonumber \\
& = &  -k +\left(z_i-z_{i+1}\right)p - p  \nonumber \\
& = & \left\{\begin{array}{ll} -k-p & \mbox{ if } z_i - z_{i+1} = 0 \\ -k & \mbox{ if } z_i - z_{i+1} = 1
\end{array}\right.
\eeqr
Now, since $a_{i,j}\neq 0$ if and only if $i-j = b$ or $i-j = -k$, we see that each $C_i, i=1,\dots, p-1$, is  upper  bidiagonal if $z_{i-1}-z_{i} = 1$ and lower bidiagonal if $z_{i-1}-z_i =0$, and that the entries in the two nonzero diagonals are all positive.

Next, consider $C_p$.  We have
\begin{eqnarray}
\sigma_{N_{p-2}+j}-\sigma_{j} &=& \gamma_{p-1} + (j-1)p - \left(1+(j-1)p\right) \nonumber \\
&=&  b  \\
\sigma_{N_{p-2}+j}-\sigma_{j+1} &=& \gamma_{p-1} + (j-1)p - \left(1+jp\right) \nonumber \\
&=&  -k
\eeqr
Thus $C_p$ is an upper bidiagonal $n_{p-1}\times n_0$ matrix.

Now we have $C$ in the form \eqref{C}.  Thus
\beq
C^p = \bpm D_1 & 0 & \cdots & 0 \\
                      0 & D_2 & \cdots & 0 \\
                       &  & \ddots & \\
                       0 & 0  & 0  & D_p
                       \epm ,
\label{Cp}
\eeq
where
\beq
D_1 = C_1C_{2}\cdots C_{p}, \;\; D_2 = C_2C_{3}\cdots C_{p}C_1, \;\; \dots, \;\; D_p = C_pC_1\cdots C_{p-1}
\eeq
It follows that the non-zero spectra of each of the products $D_j$
are all equal and that the non-zero spectra of $C$ consists of the $p$th roots of the eigenvalues of $D_j$.  Notice that each of the $D_j$ are square $n_{j-1}\times n_{j-1}$ matrices.  Furthermore, for each $j$, $n_j = m=\left\lfloor n/p\right\rfloor$ or $n_j=m+1$, and for at least one $j$, $n_j=m$.  Thus, if $m=0$ all the eigenvalues are zero.  So we assume that $m>0$.

So let $j$ be such that $n_{j-1}=m$, so that $D_j$ is $m\times m$.  We will show that the eigenvalues of $D_j$ are all real, positive and distinct.  For this it is sufficient to show that $D_j$ is oscillatory.  First, we recall a few definitions.  A matrix is totally nonnegative (positive) if all minors of all sizes are nonnegative (positive).  A matrix $X$ is oscillatory if there exists a positive integer $s$ such that $X^s$ is totally positive.  Oscillatory matrices have the remarkable property that their eigenvalues are distinct, real and positive\cite[cf. Ch. 6, Theorem 3.2]{mi88}.  Moreover, a matrix $X$ is oscillatory if and only if it is (a) totally nonnegative, (b) nonsingular, and (c) satisfies $x_{i,i+1}, x_{i+1,i} > 0$ for all $i$.  We will verify these three conditions for $D_j$.

Condition (c) follows from the fact that $z_0=z_1 = 0$, so $C_1$ is lower bidiagonal.  And since $C_p$ is upper bidiagonal, $D_j$ is the product of upper and lower bidiagonal matrices.  Thus, the super and sub-diagonal entries of $D_j$ are all positive.  Conditions (a) and (b) follow from the Cauchy-Binet identity.  For any pair of ordered subsets $\alpha, \beta \subset \{1,2,\dots, m\}$ of the same cardinality, the determinant of $D_j(\alpha,\beta)$ is
\beqr
\mbox{det}\left(D_j(\alpha,\beta)\right) & = & \mbox{det}\left(\left( C_j C_{j+1}\cdots C_{[[j+p-1]]} \right)(\alpha, \beta)\right) \nonumber \\ 
& = & \sum_{\theta_{j},\dots, \theta_{j+p-2}} \prod_{i=j}^{(j+p-1)} \mbox{det}\left(C_{[[i]]}\left(\theta_{i-1},\theta_i\right)\right)
\label{CB}
\eeqr
where $[[i]] = i\mbox{ mod } p$,  $\theta_{j-1} = \alpha, \theta_{j+p-1}=\beta$, and the sum is taken over all ordered subsets $\theta_j,\dots,\theta_{j+p-2} $ where the submatrices are defined.  A simple exercise shows that all minors of bidiagonal matrices with positive entries on the two diagonals are nonnegative.  It immediately follows from \eqref{CB} that $D_j$ is totally nonnegative.  Thus, condition (a) is satisfied.

It remains to be shown that $D_j$ is nonsingular.  For this we use \eqref{CB} again, with $\alpha=\beta=\{1,2,\dots, m\}$.  We have
\beqr
\mbox{det}\left(D_j\right) & = & \prod_{i=j}^{(j+p-1)} \mbox{det}\left(C_{[[i]]}\left(\alpha,\alpha\right)\right) \nonumber \\
&& + \sum_{\theta_{j},\dots, \theta_{j+p-2}\neq \alpha} \prod_{i=j}^{(j+p-1)} \mbox{det}\left(C_{[[i]]}\left(\theta_{i-1},\theta_i\right)\right),
\label{CB2}
\eeqr
where $\theta_{j-1} = \theta_{j+p-1}=\alpha$.  The leading principal submatrices $C_i(\alpha,\alpha)$ are bidiagonal with positive entries on their diagonals, so the first term in \eqref{CB2} is positive, and the remaining terms are nonnegative.  Hence $\mbox{det}\left(D_j\right) > 0$ and condition (b) is satisfied.

Since $D_j$ is oscillatory it has $m$ distinct, positive eigenvalues $\omega_1,\dots, \omega_m$, and thus the spectrum of $A$ consists of $q$ zeros together with the numbers
\beq 
\omega_s^{1/p} e^{i2\pi j/p}, \;\; j = 1,\dots, p, \; s = 1,\dots, m.
\eeq
This completes the proof in the case when $b$ and $k$ are relatively prime.

In the case when $g=\mbox{gcd}(b,k)>1$, we first form $B$ as the direct sum of matrices as in \eqref{B}.  The spectrum of $A$ is the union of the spectra of the $B_i$'s, so we apply the previous result to each of the matrices $B_1,\dots, B_g$.   Note that for $i=1,\dots, n\mbox{ mod }g$, $B_i$ is an 
$\left(\left\lfloor n/g\right\rfloor +1 \right) \times \left(\left\lfloor n/g\right\rfloor +1 \right) $ square matrix and for $i= n\mbox{ mod }g +1,\dots, g$, $B_i$ is an $\left\lfloor n/g\right\rfloor  \times \left\lfloor n/g\right\rfloor  $ square matrix.  The formulas \eqref{g1} and \eqref{g2} are obtained by applying the result for relatively prime $b$ and $k$ to each of the matrices $B_i$ and then counting.  In the general case, although all of the nonzero eigenvalues are of the form $r\,e^{i2\pi j/p}$, there is no guarantee that the $r$'s are all distinct since it may happen that the spectra of two or more of the $B_i$'s overlap.
\end{proof}

In the preceeding proof the form of $C=PAP^T$ in \eqref{C} and \eqref{Cp} depended only on the zero pattern of $A$.  So, as long as $a_{ij}=0$ outside of the two diagonal bands $(b+j,j)$ and $(i,k+i)$, the eigenvalues of $A$ are the $p$th roots of the eigenvalues of $D_j$, and hence the spectrum is invariant under rotations by $2\pi/p$.  Moreover, if we impose the weaker condition that $a_{b+j,j}, a_{i,k+i} \geq 0$, then all minors of the bidiagonal matrices $C_i$ are still nonnegative, so the $D_j$'s in \eqref{Cp} are all totally nonnegative.  The eigenvalues of a totally nonnegative matrix are real and nonnegative, but not necessarily distinct and positive \cite[cf. Ch. 6, Theorem 2.5]{mi88}.  Thus, we have proved Corollary \ref{cor1}.



\end{document}